\newtheorem{theorem}{Theorem}
\newtheorem{lemma}[theorem]{Lemma}
\newtheorem{proposition}[theorem]{Proposition}
\newtheorem{corollary}[theorem]{Corollary}
\newtheorem{conjecture}[theorem]{Conjecture}
\theoremstyle{definition}
\theoremstyle{example}
\newcommand{\bb}{\mathbb}
\newcommand{\mc}{\mathcal}
\newcommand{\Z}{\bb{Z}}
\title{On Anti-Powers in Aperiodic Recurrent Words}
\author{Aaron Berger\thanks{bergera@mit.edu}\\ \small\textit{Department of Mathematics, MIT}\\\small\textit{182 Memorial Drive, Cambridge, MA 02139}\\~\\
	Colin Defant\thanks{cdefant@princeton.edu}\\ \small\textit{Department of Mathematics, Princeton University}\\\small\textit{Fine Hall, 304 Washington Road, Princeton, NJ 08544}}
\date{}
\begin{document}
	\maketitle

\begin{abstract}
Fici, Restivo, Silva, and Zamboni define a \emph{$k$-anti-power} to be a concatenation of $k$ consecutive words that are pairwise distinct and have the same length. They ask for the maximum $k$ such that every aperiodic recurrent word must contain a $k$-anti-power, and they prove that this maximum must be 3, 4, or 5. We resolve this question by demonstrating that the maximum is 5. We also conjecture that if $W$ is a reasonably nice aperiodic morphic word, then there is some constant $C = C(W)$ such that for all $i,k\geq 1$, $W$ contains a $k$-anti-power with blocks of length at most $Ck$ beginning at its $i^\text{th}$ position. We settle this conjecture for binary words that are generated by a uniform morphism, characterizing the small exceptional set of words for which such a constant cannot be found.
This generalizes recent results of the second author, Gaetz, and Narayanan that have been proven for the Thue-Morse word, which also show that such a linear bound is the best one can hope for in general. 
\end{abstract}

\section{Introduction}
The problems we are concerned with in this paper arise in the study of combinatorics on infinite words, or  \emph{anti-Ramsey theory on $\Z$}. The original conception of Ramsey theory focused on unavoidable structures in colored graphs and began with Ramsey's work in 1930. Its extension to colorings of the integers has produced many notable results including the theorems of Roth and van der Waerden. Fici, Restivo, Silva, and Zamboni \cite{fici2018anti} describe Ramsey theory as an \textit{old and important} area of combinatorics; from this the observant reader may deduce that the variant they study, anti-Ramsey theory, is conversely \textit{new and exciting}. The study of anti-Ramsey theory was initiated by Erd\H{o}s, Simonovits, and S\'os in 1975 (one may debate whether 1975 qualifies as ``new'' in combinatorics), and the recent work of Fici et al. has been the impetus for a flood of new activity in the area \cite{badkobeh2018algorithms,
burcroff2018k,defant2017anti,
fici2018abelian,gaetz2018anti,
kociumaka2018efficient,
narayanan2017functions}. Specifically, the notion that has attracted this activity is that of a \textit{$k$-anti-power}, which Fici et al. define to be a word formed by concatenating $k$ consecutive pairwise-distinct factors (i.e., a word of length $km$ that can be partitioned into pairwise-distinct contiguous ``blocks'' of size $m$). 

An infinite word $W$ is \emph{aperiodic} if it is not eventually periodic, and it is \textit{recurrent} if every finite factor of $W$ occurs infinitely often in $W$. We say $W$ is \emph{uniformly recurrent} if for every finite factor $w$ of $W$, there is a positive integer $n$ such that every factor of $W$ of length $n$ contains $w$ as a factor. In their foundational work, Fici et al. demonstrate, among other results, three fundamental properties of anti-powers in infinite words:
\begin{theorem}[Fici, Restivo, Silva, Zamboni \cite{fici2018anti}]

	~
	\begin{enumerate}
		\item (Corollary 11) Every infinite aperiodic word contains a 3-anti-power.
		\item (Proposition 12) There exist aperiodic infinite words avoiding 4-anti-powers.
		\item (Proposition 13) There exist infinite aperiodic recurrent words avoiding 6-anti-powers.
	\end{enumerate}
	
\end{theorem}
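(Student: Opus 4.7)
My plan is to treat the three parts separately: Part 1 by a structural argument leading to a contradiction with aperiodicity, and Parts 2 and 3 by explicit construction.

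For Part 1, I would argue the contrapositive: assume $W$ avoids all 3-anti-powers and deduce that $W$ must be eventually periodic. For each block length $m$, consider the sequence of aligned $m$-blocks $B_j = W[jm\,..\,(j+1)m-1]$. The hypothesis forces that for every $j$, at least two of $B_j, B_{j+1}, B_{j+2}$ coincide. A case analysis on which pair coincides gives local periodicity information: $B_j = B_{j+1}$ or $B_{j+1} = B_{j+2}$ yields local period $m$ on a stretch of length $2m$, while $B_j = B_{j+2}$ with $B_{j+1}$ distinct yields local period $2m$ with only two aligned blocks alternating. I would then propagate this local structure into a global period by sliding the window of three consecutive blocks and combining overlapping constraints. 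The main obstacle is handling the third case and combining overlapping windows whose cases might differ; an efficient route may be to choose $m$ carefully and invoke Fine and Wilf's theorem to collapse competing local periods, or to set up a factor-complexity bound and apply Morse--Hedlund.

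For Part 2, I would exhibit an explicit aperiodic word avoiding 4-anti-powers. A natural candidate is a binary word of the form $W = 0^{a_1} 1\, 0^{a_2} 1\, 0^{a_3} 1 \cdots$ with $(a_i)$ rapidly growing (e.g., $a_i = 2^i$). Aperiodicity is immediate from the unbounded run lengths. The absence of 4-anti-powers would be checked by case analysis on block length $m$: for small $m$ the vocabulary of possible factors is heavily restricted (patterns such as $11$ do not occur), capping the number of distinct length-$m$ blocks; for large $m$, long runs of $0$s dominate most windows, so among any four consecutive $m$-blocks at least two fall within (or are shifted by) a single long run and agree. The delicate regime is intermediate $m$ where neither extreme argument applies cleanly, and this is where most of the case analysis would live.

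For Part 3, the recurrence requirement rules out the Part~2 candidate, since the growing runs prevent most factors from reappearing infinitely often. I would instead look to morphic words: a fixed point $W$ of a carefully chosen primitive substitution $\sigma$. Such a $W$ is automatically uniformly recurrent by primitivity of $\sigma$ and aperiodic for appropriate choices of $\sigma$. To verify the absence of 6-anti-powers I would exploit the self-similarity: any candidate 6-anti-power at block length $m$, for $m$ comparable to a power of the scale of $\sigma$, can be pulled back via desubstitution to a configuration at a smaller scale in $W$, setting up a finite check together with an induction on scale. The main obstacle here is designing the morphism itself: familiar morphic words such as Thue--Morse contain $k$-anti-powers for every $k$ (as shown in the results of Defant, Gaetz, and Narayanan cited in the abstract), so $\sigma$ must be crafted with enough enforced structure to prevent six pairwise-distinct consecutive same-length factors from ever appearing.
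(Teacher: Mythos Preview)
The paper does not prove this theorem at all: it is quoted as background from Fici, Restivo, Silva, and Zamboni \cite{fici2018anti}, with the three items labeled by the numbering in that external paper (Corollary~11, Propositions~12 and~13). So there is no ``paper's own proof'' to compare against; any argument you produce is necessarily your own reconstruction of results from the cited reference.

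That said, your proposal has uneven standing across the three parts. For Part~1, restricting attention to the aligned blocks $B_j = W[jm..(j+1)m-1]$ discards information: the hypothesis is that \emph{no} factor of $W$ is a 3-anti-power, i.e., for every starting index $i$ and every $m$ at least two of the three consecutive $m$-blocks beginning at $i$ agree. Working only with $i$ a multiple of $m$ leaves you with the alternating case $B_j = B_{j+2} \neq B_{j+1}$ for all $j$, which by itself only gives period $2m$ on aligned blocks and does not obviously force eventual periodicity of $W$ without using unaligned windows too. The obstacle you flag is real, and you will likely need the full hypothesis (all starting positions), not just the aligned slice. For Part~2, your construction $0^{a_1}1\,0^{a_2}1\cdots$ with superexponential $a_i$ is in the right spirit and is essentially the kind of example Fici et al.\ use; the ``intermediate $m$'' regime you worry about is exactly where the work lies, but it is manageable.

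The genuine gap is Part~3. You have not proposed a construction, only a strategy (``find a primitive substitution $\sigma$ and desubstitute''), and you correctly observe that the obvious candidates such as Thue--Morse fail. Without a specific $\sigma$ in hand, there is nothing to verify, and it is not clear a priori that any primitive morphic fixed point avoids 6-anti-powers; indeed, the present paper's Theorem~\ref{morphictheorem} shows that uniformly recurrent aperiodic binary words generated by uniform morphisms contain $k$-anti-powers for every $k$, which rules out a large natural class of candidates for your $\sigma$. The construction in \cite{fici2018anti} is of a different nature, and you would need either to locate it or to exhibit a concrete recurrent aperiodic word and carry out the verification.
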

It has remained unknown whether every infinite aperiodic recurrent word must contain a 4-anti-power or a 5-anti-power; in this paper, we show the stronger statement of the two, thereby closing completely the gap between the lower bound and upper bound:
\begin{theorem}\label{main theorem}
	Every infinite aperiodic recurrent word contains a 5-anti-power.
\end{theorem}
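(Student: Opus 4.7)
The plan is to argue by contradiction: assume $W$ is aperiodic, recurrent, and avoids $5$-anti-powers, and derive a contradiction with aperiodicity by eventually forcing $W$ to be periodic. Because every suffix of an aperiodic word is aperiodic, the Fici--Restivo--Silva--Zamboni 3-anti-power theorem (Corollary~11) produces a 3-anti-power $u_0 u_1 u_2$ in $W$ with each $u_j$ a factor of length $m$, appearing as consecutive blocks at some position $i$, with $u_0,u_1,u_2$ pairwise distinct. Setting $u_3 = W[i+3m,i+4m)$ and $u_4 = W[i+4m,i+5m)$, the no-$5$-anti-power hypothesis forces a repetition among $u_0,\ldots,u_4$, and since $u_0,u_1,u_2$ are already distinct it must involve $u_3$ or $u_4$: thus $u_3 \in \{u_0,u_1,u_2\}$, or $u_4 \in \{u_0,u_1,u_2\}$, or $u_3 = u_4$.

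Next I would bring in recurrence. Since $W$ is recurrent, the length-$5m$ factor $u_0 u_1 u_2 u_3 u_4$ occurs at infinitely many positions $i_1 < i_2 < \ldots$ of $W$, and the same no-$5$-anti-power restriction applies at each such occurrence. In addition, sliding the $5$-block window one block to the left or right of $i_k$ constrains $W[i_k - m, i_k)$ and $W[i_k + 5m, i_k + 6m)$, and shifting the window by an amount not divisible by $m$ (so that the blocks are misaligned with the $u_j$) gives further constraints. Applying the 3-anti-power theorem at various positions, possibly with different block lengths, should produce a rich collection of local repetition constraints clustered around each occurrence of $u_0 u_1 u_2$. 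Recurrence is essential here: without it, one cannot promote a single 3-anti-power into a family of anti-powers forced to share structure, and indeed Fici et al.\ exhibit non-recurrent aperiodic words avoiding even $4$-anti-powers.

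The main obstacle is turning this web of local repetition constraints into a global periodicity statement that contradicts aperiodicity. I expect the argument to begin with a judicious choice of the initial 3-anti-power --- for instance, by minimizing the block length $m$, or by requiring $i$ to sit deep inside a long factor that itself recurs --- and then to proceed by a case analysis on which of the equalities $u_3 = u_j$, $u_4 = u_j$, or $u_3 = u_4$ is forced infinitely often at occurrences of $u_0 u_1 u_2$. In each case, the propagated constraints, combined with aperiodicity (which via Morse--Hedlund guarantees at least $m+1$ distinct factors of length $m$, so that the equalities cannot be explained away by a small factor set), should pin down the continuation of every recurrent copy of $u_0 u_1 u_2$ uniquely and thereby expose a period of $W$, yielding the desired contradiction.
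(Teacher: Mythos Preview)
Your proposal is not a proof but an outline that explicitly names its own main obstacle without resolving it. Everything up to ``The main obstacle is\ldots'' is routine; after that you only \emph{expect} that a judicious choice of the initial $3$-anti-power, a case split on which equality among $u_3,u_4$ and the earlier $u_j$ holds, and Morse--Hedlund will eventually force periodicity. None of these expectations is justified, and there is a structural reason to doubt them: Fici et al.\ exhibit aperiodic recurrent words avoiding $6$-anti-powers, so any argument of the form ``no long anti-powers $\Rightarrow$ web of repetition constraints $\Rightarrow$ periodicity'' must use something specific to the number $5$. Your outline never isolates what is special about $5$ versus $6$; the same outline (start from a $3$-anti-power, extend by three more blocks, note a forced coincidence, propagate along recurrent copies) would apply verbatim to a putative proof that every aperiodic recurrent word contains a $6$-anti-power, which is false.

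The paper's argument is completely different and is constructive rather than by contradiction. The key idea you are missing is to plant, in advance, a copy of a long unbordered factor $w$ inside four of the five blocks, by first using recurrence to locate four well-spaced occurrences of $w$ and then choosing the start position and block length so that block boundaries fall between these occurrences. One then varies the block length over $11$ nearby values; if two blocks coincide for some value, the planted copy of $w$ sits at a definite offset inside both, and for any other value the offsets shift by different amounts, so unborderedness forbids a second coincidence of the same pair. Since there are only $\binom{5}{2}=10$ pairs and $11$ candidate lengths, some length yields a genuine $5$-anti-power. The reason this fails for $6$ is that placing two block boundaries gives fine control over only four blocks, so one cannot plant $w$ in five of six blocks; the paper's remark makes this explicit.
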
 

A natural question to investigate next is which restrictions on words guarantee longer anti-power factors.  One obvious direction to take concerns morphic words, which we define next. These form a well-studied collection of words that are often aperiodic and recurrent. Indeed, morphic words originally provided motivation for the study of general aperiodic recurrent words.  

Let $\mathcal A^*$ denote the set of all finite words over the alphabet $\mathcal A$ (i.e., the free monoid generated by $\mathcal A$). A \emph{morphism} is a map $\mu:\mathcal A^*\to\mathcal A^*$ with the property that $\mu(ww')=\mu(w)\mu(w')$ for all $w,w'\in\mathcal A^*$. A morphism is uniquely determined by specifying its values on the letters in $\mathcal A$. For example, if $\mathcal A=\{0,1\}$, then $\mu(0110)=\mu(0)\mu(1)\mu(1)\mu(0)$. Given $a\in\mathcal A$, a morphism $\mu$ is said to be \emph{prolongable at $a$} if $\mu(a)=as$ for some nonempty word $s$. If $\mu$ is prolongable at $a$, then the sequence $a,\mu(a),\mu^2(a),\ldots$ converges to the infinite word $\mu^\omega(a)$. An infinite word $W$ is called \emph{pure morphic} if $W=\mu^\omega(a)$ for some morphism $\mu$ that is prolongable at $a$. In this case, we also say $W$ is \emph{generated} by $\mu$. 

A morphism $\mu:\mathcal A^*\to\mathcal A^*$ is called \emph{$r$-uniform} if $\mu(a)$ has length $r$ for every $a\in\mathcal A$. A morphism is simply called \emph{uniform} if it is $r$-uniform for some $r$. An infinite word is called \emph{morphic} if it is the image under a $1$-uniform morphism (also called a \emph{coding}) of a pure morphic word. In Section \ref{Sec:Morphic}, we consider a binary word $W$ that is generated by a uniform morphism $\mu$. In order for this to make sense, $\mu$ must be $r$-uniform for some $r\geq 2$ (otherwise, it is not prolongable). We refer the reader to \cite{ alloucheshallit, bugeaud2011morphic} for more information about morphic words, uniform morphisms, and their connections with automatic sequences.

We state a conjecture here, left purposefully vague.
\begin{conjecture}\label{Conj1}
	If $W$ is a sufficiently well-behaved aperiodic morphic word, then there is a constant $C = C(W)$ such that for all positive integers $i$ and $k$, $W$ contains a $k$-anti-power with blocks of length at most $Ck$ beginning at its $i^\text{th}$ position.
\end{conjecture}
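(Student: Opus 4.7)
Conjecture \ref{Conj1} is stated vaguely, so my proposal targets the concrete instance the authors say they settle: binary pure morphic words $W = \mu^\omega(a)$ with $\mu$ an $r$-uniform morphism ($r \geq 2$). The Defant--Gaetz--Narayanan argument for Thue--Morse corresponds to $r = 2$ with $\mu(0) = 01$, $\mu(1) = 10$, and exploits the identity $W = \mu(W)$ to reduce questions about length-$2n$ factors to questions about length-$n$ factors. The plan is to generalize this by developing a self-similar lifting procedure for arbitrary uniform binary $\mu$, and then to isolate the exceptional morphisms for which the lift fails.

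The first step is to set up base-$r$ position labeling: every position $j$ factors as $j = qr + s$ with $0 \leq s < r$, so that $W_j$ is the $s$-th letter of $\mu(W_q)$. Iterating this relation, each length-$r^\ell m$ factor beginning at a position divisible by $r^\ell$ equals $\mu^\ell$ applied to its length-$m$ parent factor. Given a start position $i$ and integer $k$, the next step is to choose $\ell = O(\log_r k)$ and a block length of the form $m \cdot r^\ell$ roughly proportional to $k$, with $m$ chosen so that the $k$ consecutive block starts $i,\, i + mr^\ell,\, \dots,\, i + (k-1)mr^\ell$ all share a common residue modulo $r^\ell$. Distinctness of the $k$ parent blocks at scale $m$ then lifts to distinctness of the $k$ child blocks at scale $mr^\ell$, since each child block is determined by its parent together with a fixed alignment offset. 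The reduction thereby converts the original problem into that of finding $k$ distinct length-$m$ factors beginning near $\lfloor i/r^\ell \rfloor$ in $W$, which follows from aperiodicity and recurrence once $m$ is taken large enough that the factor complexity of $W$ at length $m$ exceeds $k$.

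The main obstacle, and the source of the exceptional set, is that the lift step can fail when $\mu(0)$ and $\mu(1)$ share large common substructure. For instance, if $\mu(0)$ and $\mu(1)$ agree on a common prefix (or suffix) of length $p$, then shifting a block start across an alignment boundary by up to $p$ positions can collapse distinct parents into identical children, breaking the induction. I therefore plan to classify binary $r$-uniform morphisms according to the combinatorial agreement type of the pair $(\mu(0), \mu(1))$: the length of their longest common prefix and suffix, whether one is a cyclic shift of the other, and whether iterating $\mu$ enforces periodicity on some arithmetic sublattice. Morphisms with bounded agreement should admit the lifting argument with $C = O(r)$; intermediate cases should require a more delicate choice of $\ell$ balancing block length against alignment loss; and the maximally degenerate morphisms (for example $\mu(0) = \mu(1)$, or morphisms whose images force sublinear factor complexity) should constitute exactly the exceptional set, consisting of words that are periodic, eventually constant, or otherwise fail the informal ``niceness'' of the conjecture. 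The constant $C = C(\mu)$ then emerges from the recursion together with a finite base-case verification at small scales.
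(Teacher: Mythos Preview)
Your proposal takes a different route from the paper and contains a genuine gap at the base of the recursion. You reduce finding a $k$-anti-power with block length $mr^\ell$ at position $i$ to finding $k$ pairwise distinct length-$m$ (or length-$(m{+}1)$) factors at the consecutive parent positions near $\lfloor i/r^\ell\rfloor$. But your stated resolution of that reduced problem---``which follows from aperiodicity and recurrence once $m$ is taken large enough that the factor complexity of $W$ at length $m$ exceeds $k$''---is a non sequitur: high factor complexity says nothing about whether $k$ \emph{specific} consecutive blocks are pairwise distinct. Worse, you have already committed to $mr^\ell\approx Ck$ with $r^\ell\approx k$, which forces $m$ to be bounded; then for $k>2^m$ there simply are not enough binary words of length $m$ for the parent anti-power to exist. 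If you instead let $m$ grow with $k$, the block length exceeds $Ck$, and in any case the parent problem is just the original problem again, so the recursion never bottoms out.

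The paper avoids this entirely via a \emph{recognizability} (synchronization) lemma rather than a lift. It shows that an occurrence of $AAB$ (or $BBA$) in $W$ can only begin at an index divisible by $r$; combined with uniform recurrence, this forces any factor of length at least $(c_1+2)r^\alpha-1$ that appears at two positions $i_1,i_2$ to satisfy $r^\alpha\mid i_2-i_1$. One then picks $\alpha$ with $r^{\alpha-1}<k\le r^\alpha$ and sets the block length to $L=(c_1+2)r^\alpha-1$. Two equal blocks would give $r^\alpha\mid (j'-j)L$, and since $\gcd(L,r^\alpha)=1$ this forces $r^\alpha\mid j'-j$, hence $j=j'$. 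No injective-lifting step is needed, and the alignment-overlap obstacle you worry about never arises. The exceptional set also comes out differently from your prefix/suffix-overlap taxonomy: it is exactly the words failing aperiodicity ($\mu(0)=\mu(1)$ or $W\in\{000\cdots,0111\cdots,0101\cdots\}$) or uniform recurrence ($\mu(1)=11\cdots1$).
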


Corollary 7 of \cite{fici2018anti} gives a similar result without the uniform linear bound. The works of the second author and Narayanan  \cite{ defant2017anti, narayanan2017functions} confirm and extend this conjecture when $W={\bf t}$ is the famous Thue-Morse word and $i=1$. The subsequent results of Gaetz \cite{gaetz2018anti} confirm this conjecture for $W={\bf t}$ and for every fixed $i$ with a constant $C$ that could depend on $i$. More precisely, let $\gamma_{i-1}(k)$ denote the smallest positive integer $m$ such that the factor of ${\bf t}$ of length $km$ beginning at the $i^\text{th}$ position of ${\bf t}$ is a $k$-anti-power. Gaetz proved that \[\frac{1}{10}\leq\liminf_{k\to\infty}\frac{\gamma_{i-1}(k)}{k}\leq\frac{9}{10}\quad\text{and}\quad\frac{1}{5}\leq\limsup_{k\to\infty}\frac{\gamma_{i-1}(k)}{k}\leq\frac{3}{2}\] for every positive integer $i$. The lower bounds in these estimates show that the linear upper bound in Conjecture \ref{Conj1} is the best one can hope to prove in general. 

In Section \ref{Sec:Morphic}, we settle Conjecture \ref{Conj1} in the case in which $W$ is a binary word that is generated by a uniform morphism. More precisely, we will see that the conjecture holds in all but a few exceptional cases that are characterized in the following proposition. In this proposition, we assume our binary word begins with $0$, but the analogous statement certainly holds if the word starts with $1$ and we switch the roles of the letters $0$ and $1$ everywhere. 

\begin{proposition}\label{Prop1}
Let $W$ be a binary word that starts with $0$ and is generated by an $r$-uniform morphism $\mu$. Then:
\begin{itemize}
    \item  $W$ is aperiodic if and only if $\mu(0)\neq\mu(1)$ and $W\not\in\{0000\cdots, 0111\cdots, 0101\cdots\}$.
    \item $W$ is uniformly recurrent if and only if it is $0000\cdots$ or $\mu(1)\neq 11\cdots 1$.
\end{itemize}
\end{proposition}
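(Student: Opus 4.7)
The plan is to prove the two bullets by case analysis on $\mu(0)$ and $\mu(1)$, using the identity $W=\mu(W)$ together with three supporting lemmas: (a) fixed points of primitive uniform morphisms are uniformly recurrent \cite{alloucheshallit}; (b) an eventually periodic recurrent word is purely periodic; and (c) if $v$ is a primitive binary word of length $p$ satisfying $\mu(v)=v^r$ with $\mu(0)\neq\mu(1)$, then $p\leq 2$.

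For the aperiodicity statement, $(\Leftarrow)$ is immediate: if $\mu(0)=\mu(1)$, every block of $W=\mu(W_1)\mu(W_2)\cdots$ equals $\mu(0)$, so $W=\mu(0)^\omega$; each listed exception is visibly eventually periodic. For $(\Rightarrow)$, assume $W$ is eventually periodic and $\mu(0)\neq\mu(1)$. \emph{If $\mu(0)=0^r$}, induction gives $\mu^n(0)=0^{r^n}$, so $W=0^\omega$. \emph{If $\mu(1)=1^r$ and $\mu(0)\neq 0^r$}, let $b_n$ be the number of $0$s in $\mu^n(0)$; the recursion $b_{n+1}=(\#0\text{ in }\mu(0))\cdot b_n$ yields $b_n\equiv 1$ precisely when $\mu(0)=01^{r-1}$, giving $W=01^\omega$. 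Otherwise $\mu(0)$ has $\geq 2$ zeros and $b_n\to\infty$; since a $1$-run of length $\ell$ becomes $1^{r\ell}$ under $\mu$, $W$ has arbitrarily long $1$-runs between its infinitely many $0$s, contradicting the bounded gaps forced by eventual periodicity. \emph{If $\mu(0)\neq 0^r$ and $\mu(1)\neq 1^r$}, both $\mu(0)$ and $\mu(1)$ contain both letters (passing to $\mu^2$ in the edge case $\mu(1)=0^r$), so $\mu$ is primitive; by (a), $W$ is uniformly recurrent, and by (b), purely periodic. Writing $W=v^\omega$ with $v$ primitive of length $p$, the identity $W=\mu(W)=\mu(v)^\omega$ forces $\mu(v)=v^r$. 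Lemma (c) then gives $p\leq 2$, so $W=(01)^\omega$ (since $W$ starts with $0$ and contains a $1$).

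For the uniform recurrence statement, $(\Leftarrow)$: $W=0^\omega$ is trivial, and if $\mu(1)\neq 1^r$ with $\mu(0)\neq 0^r$, then $\mu$ is primitive and (a) applies. For $(\Rightarrow)$ I argue the contrapositive: if $\mu(1)=1^r$ and $W\neq 0^\omega$, then $\mu(0)$ contains a $1$; since any single $1$ in $\mu^n(0)$ spawns a block $1^{r^k}$ in $\mu^{n+k}(0)$, the word $W$ has arbitrarily long $1$-runs, while the factor $0$ appears at position $1$. Hence the gaps between occurrences of $0$ are unbounded, ruling out uniform recurrence.

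The main obstacle is lemma (c). To prove it, observe that $\mu(v)=v^r$ means $\mu(v_i)$ is precisely the length-$r$ window of $v^\omega$ starting at position $(i-1)r+1$. Since $\mu(0)\neq\mu(1)$, this forces $v_i=v_j \iff (i-j)r\equiv 0\pmod p$, i.e.\ $v_i=v_j \iff i\equiv j\pmod{p/\gcd(p,r)}$; thus $v$ takes exactly $p/\gcd(p,r)$ distinct values on its $p$ indices, and binarity forces $p/\gcd(p,r)\leq 2$. Combined with the fact that $v$ has period $p/\gcd(p,r)$ but primitive period $p$, this yields $\gcd(p,r)=1$ and $p\leq 2$. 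Lemma (a) is classical \cite{alloucheshallit}, and (b) follows from the observation that if $W$ has pre-period $N\geq 1$, then $W_N\neq W_{N+p}$ by minimality; yet every length-$(N+p)$ factor that appears in the periodic tail has period $p$, so the prefix $W_1\cdots W_{N+p}$ cannot recur in the tail, contradicting recurrence of $W$.
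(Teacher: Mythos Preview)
Your argument is essentially correct and takes a genuinely different route from the paper's. For the hard direction of the aperiodicity characterization, the paper splits according to whether $W$ is uniformly recurrent: in the uniformly recurrent case it \emph{invokes the anti-power machinery of Theorem~\ref{morphictheorem}} (noting that its proof only used $A\neq B$ and the presence of $001$ or $110$, not aperiodicity itself) to produce arbitrarily long anti-powers at every index, which rules out eventual periodicity; in the non-uniformly-recurrent case it argues directly. Your approach is instead self-contained: you reduce to the primitive case, cite the classical fact that primitive morphic fixed points are uniformly recurrent, upgrade eventual periodicity to pure periodicity, and then bound the period via your lemma~(c). This is more elementary in that it does not rely on the anti-power theorem, and it gives an independent verification of the proposition. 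The paper's route, on the other hand, nicely illustrates that the anti-power result already encodes aperiodicity.

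Two small issues. First, your labels $(\Leftarrow)$ and $(\Rightarrow)$ in the aperiodicity part are swapped: showing that $\mu(0)=\mu(1)$ or $W\in\{0^\omega,01^\omega,(01)^\omega\}$ forces eventual periodicity is the contrapositive of $(\Rightarrow)$, and your main case analysis is the contrapositive of $(\Leftarrow)$. Second, and more substantively, the proof of lemma~(c) has a gap. You assert $v_i=v_j \iff (i-j)r\equiv 0\pmod p$, but only the $\Leftarrow$ direction is immediate; from $v_i=v_j$ you get equality of two length-$r$ windows of $v^\omega$, and equal windows need not start at congruent positions mod $p$ when $r<p$. The fix is easy: since $\mu^n(v)=v^{r^n}$ and $\mu^n(0)\neq\mu^n(1)$ for all $n$, replace $\mu$ by a power so that $r\ge p$; then each window covers a full period and the biconditional holds. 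With that patch your argument goes through.
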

Let us remark that Conjecture \ref{Conj1} is easily seen to fail in both of these exceptional cases. Observing the words $W$ that fail to be aperiodic, we see that each has at most $r$ distinct factors of any length, and so cannot have $k$-anti-powers for $k > r$. If $W$ fails to be uniformly recurrent, it follows from the above characterization that $W$ has constant factors of arbitrary length, inside which one certainly cannot find anti-powers of bounded length.

The following theorem verifies Conjecture \ref{Conj1} for all binary words that are generated by a uniform morphism and that do not lie in the set of exceptional words listed in Proposition \ref{Prop1}.  
\begin{theorem}\label{morphictheorem}
If $W$ is a uniformly recurrent aperiodic binary word that is generated by a uniform morphism, then there is a constant $C=C(W)$ such that for all positive integers $i$ and $k$, $W$ contains a $k$-anti-power with blocks of length at most $Ck$ beginning at its $i^\text{th}$ position.
\end{theorem}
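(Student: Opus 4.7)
The plan is to combine an iterated recognizability property of $\mu$ with a number-theoretic choice of block length. I would first prove the key lemma: there is a constant $D = D(W)$ such that for every integer $n \geq 0$, if $W[p, p+L-1] = W[p', p'+L-1]$ with $L \geq D r^n$, then $p \equiv p' \pmod{r^{n+1}}$. The base case ($n=0$) is Moss\'e's classical recognizability theorem for aperiodic primitive substitutions. Under the hypotheses of Theorem~\ref{morphictheorem} combined with Proposition~\ref{Prop1}, we have $\mu(0) \neq \mu(1)$ and neither $\mu(0) = 0^r$ nor $\mu(1) = 1^r$, so each of $\mu(0), \mu(1)$ contains both letters (after passing to $\mu^2$ if $\mu(1) = 0^r$), making $\mu$ primitive; combined with the aperiodicity of $W$, Moss\'e's theorem applies. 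The inductive step uses injectivity of $\mu$ on words (a direct consequence of $\mu(0) \neq \mu(1)$ and uniformity): if $p \equiv p' \pmod{r^n}$ and the shared factor has length $L \geq D r^{n+1}$, writing $p = p_0 r + s$, $p' = p_0' r + s$, the shared factor consists of aligned images $\mu(W_{p_0+i})$ in the middle plus two boundary pieces, so injectivity transfers the equality to a factor of $W$ of length at least $L/r - 2 \geq D r^n$ at positions $p_0+1, p_0'+1$, to which the inductive hypothesis applies. The additive boundary loss of $2$ at each level telescopes to a finite sum and can be absorbed into $D$.

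Given $i$ and $k$, I then choose $m$ to be the smallest integer with $m \geq D(k-1)$ and $\gcd(m, r) = 1$. Since every window of $r$ consecutive integers contains at least $\phi(r) \geq 1$ integer coprime to $r$, such an $m$ satisfies $m \leq D(k-1) + r \leq (D+r) k$, so we may set $C = D + r$. Consider the $k$ blocks $B_j := W[i + jm, i + (j+1)m - 1]$ for $0 \leq j \leq k-1$. If $B_j = B_{j'}$ for some $0 \leq j < j' \leq k-1$, then setting $n = \lfloor \log_r(m/D) \rfloor$ (so that $D r^n \leq m$), the iterated recognizability lemma yields $r^{n+1} \mid (j' - j) m$. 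Since $\gcd(m, r) = 1$, this forces $r^{n+1} \mid (j' - j)$. But $r^{n+1} > m/D \geq k - 1 \geq j' - j > 0$, a contradiction. Hence the $k$ blocks are pairwise distinct, giving the desired $k$-anti-power of block length at most $Ck$.

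The main obstacle is establishing the iterated recognizability lemma with a uniform constant $D$. While Moss\'e's theorem supplies a clean base case, the inductive desubstitution requires careful handling of the boundary letters and careful bookkeeping of how much length is lost at each level; the telescoping of the $L/r - 2$ loss must be verified so that $D$ does not blow up. A related subtlety is checking the hypotheses of Moss\'e's theorem across all morphisms permitted by Proposition~\ref{Prop1}, including corner cases such as $\mu(1) = 0^r$, where primitivity of $\mu$ only manifests after iterating once.
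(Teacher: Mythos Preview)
Your proposal is correct and follows the same overall architecture as the paper: establish an iterated recognizability lemma (long repeated factors of $W$ must begin at positions congruent modulo a large power of $r$), then choose a block length coprime to $r$ so that two equal blocks would force a divisibility contradicting $0<j'-j\leq k-1$. The final coprimality argument is essentially identical to the paper's.

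The two proofs differ in how the key lemma is obtained. You invoke Moss\'e's recognizability theorem as a black box for the base case and then induct by desubstitution, absorbing the boundary loss of $2$ per level into $D$. The paper instead proves recognizability from scratch for binary uniform morphisms (Lemmas~\ref{Lem0} and~\ref{Lem1}): it shows by a direct combinatorial argument that any occurrence of $AAB$ or $BBA$ must begin at an index divisible by $r$, and uses uniform recurrence to guarantee such a factor inside any long enough window. The iterated version (Corollary~\ref{Cor1}) then follows in one line by applying the base lemma to $\mu^\alpha$, which is also $r^\alpha$-uniform and generates $W$; because the constant $c_1$ depends only on $W$ (not on the morphism), it is unchanged when $\mu$ is replaced by $\mu^\alpha$, so no telescoping bookkeeping is needed at all. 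Your route is less elementary but more portable---Moss\'e applies over arbitrary alphabets and to non-uniform primitive morphisms---whereas the paper's $AAB$/$BBA$ argument is specific to the binary uniform setting. One small caution on your side: the induction as literally written does not close, since $L/r-2\geq Dr^n-2$ rather than $Dr^n$; but as you note, unfolding the recursion shows the total loss over all levels is at most $2r/(r-1)\leq 4$, so taking $D$ to be Moss\'e's constant plus $4$ repairs this.
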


\subsection*{Terminology}
Our words are always taken to be sequences of characters (letters) from a finite alphabet $\mc A$. We say a word is \emph{binary} if it is a word over the two-element alphabet $\{0,1\}$. By ``infinite," we always mean infinite to the right. To reiterate, a \emph{factor} of a word is a contiguous subword. Throughout this article, we let $[i,j]$ denote the factor of the infinite word $W$ that starts in the $i^\text{th}$ position of $W$ and ends in the $j^\text{th}$ position. A \emph{prefix} of a word is a factor that contains the first character, and a \emph{suffix} of a finite word is a factor that contains the final character. We let $|w|$ denote the length of a finite word $w$. 

\section{Constructing 5-Anti-Powers}

We will prove Theorem \ref{main theorem} by constructing a 5-anti-power in an arbitrary aperiodic recurrent word $W$. For this construction, it will be essential to find some ``anchor points'' in $W$ that will allow us to get our bearings, so to speak. For example, in the periodic word $010101\cdots$, the factors $[i,j]$ and $[i+2,j+2]$ are always identical. It will be useful for us to prohibit this from happening:
\begin{lemma}\label{spaced out}
	For every infinite aperiodic word $W$ and every $t > 0$, there is a word $w$ with the following properties:
	\begin{itemize}
		\item A copy of $w$ appears as a factor of $W$.
		\item If a factor of $W$ beginning at index $i$ is equal to $w$, then no factor of $W$ beginning at any of the indices $i+1,\ldots,i+t$ is equal to $w$.
	\end{itemize}
\end{lemma}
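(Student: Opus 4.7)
The plan is to prove the contrapositive: if no such $w$ can be found, then $W$ must be eventually periodic, contradicting aperiodicity. Assume for contradiction that for every factor $w$ of $W$ there exist two distinct occurrences of $w$ at distance at most $t$ from each other. I will apply this uniformly to the prefixes of $W$ to pin down a single common period.

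The engine of the argument is the elementary fact that if a finite word $w$ of length $n$ occurs in $W$ at indices $i$ and $i+d$ with $1 \leq d \leq t < n$, then the overlap of the two occurrences forces $w$ to have period $d$ as a standalone word: comparing character by character gives $w[j]=w[j+d]$ for $1 \leq j \leq n-d$. Applying this to the prefix $w_n := W[1,n]$ for each $n>t$, the failure of the lemma yields some $d_n \in \{1,\ldots,t\}$ such that $w_n$ has period $d_n$. Since $w_n$ is literally the first $n$ letters of $W$, this translates to $W[j]=W[j+d_n]$ for all $1 \leq j \leq n-d_n$.

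Finally, the sequence $(d_n)_{n>t}$ takes values in the finite set $\{1,\ldots,t\}$, so by pigeonhole some fixed $d$ occurs as $d_n$ for arbitrarily large $n$. Letting $n \to \infty$ along this subsequence yields $W[j]=W[j+d]$ for all $j \geq 1$, so $W$ is purely periodic with period $d$, contradicting aperiodicity. The argument is short, and the only step requiring any care is the overlap-implies-period computation, which is routine; the conceptual point that makes the proof work is that periodicity of a \emph{prefix} of $W$ is automatically periodicity of $W$ on its initial segment, so that information about different prefixes can be combined through pigeonhole into a single global period.
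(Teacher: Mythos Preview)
Your proof is correct, and it takes a genuinely different route from the paper. The paper invokes the Ehrenfeucht--Silberger theorem to produce an \emph{unbordered} factor $w$ of length $\ell>t$; unborderedness immediately rules out two overlapping occurrences of $w$ within distance $t$. You instead argue directly: assuming the lemma fails, every prefix $w_n=W[1,n]$ with $n>t$ has two occurrences in $W$ at distance $d_n\le t$, the overlap forces $w_n$ itself to have period $d_n$, and since $w_n$ is a prefix this pins down $W$ on $[1,n-d_n]$; pigeonhole on the finitely many possible $d_n$ then yields a global period.

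Each approach has its advantages. Yours is fully self-contained and elementary, matching the spirit of the paper's footnote that ``a straightforward induction would also suffice.'' The paper's approach, by contrast, yields a stronger object: the word $w$ it produces is unbordered, so in fact \emph{no} two occurrences of $w$ anywhere in $W$ can overlap at all, not merely within distance $t$. For the purposes of Theorem~\ref{main theorem} only the weaker conclusion is used, so your argument is entirely adequate here.
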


In order to prove this lemma, we appeal to a stronger statement of Ehrenfeucht and Silberger.\footnote{A straightforward induction would also suffice.} Say a word is \textit{unbordered} if no nontrivial prefix (i.e., no prefix other than the empty word and the full word) is also a suffix. 
	\begin{theorem}[\cite{ehrenfeucht1979periodicity}, Theorem 3.5]\label{ehrenfeuchttheorem}
		If $W$ is an infinite word and $t \in \Z$ is such that all unbordered factors of $W$ have length at most $t$, then $W$ is eventually periodic.
	\end{theorem}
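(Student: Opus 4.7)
The plan is to deduce this lemma directly from Theorem~\ref{ehrenfeuchttheorem}. Since $W$ is aperiodic, the contrapositive of that theorem guarantees that $W$ has unbordered factors of arbitrarily large length. In particular, given the positive integer $t$, I would choose any unbordered factor $w$ of $W$ with $|w| > t$. The first bullet of the lemma is then immediate from this choice.

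To verify the second bullet, I would argue by contradiction. Suppose $w$ occurs in $W$ at position $i$ and also at some position $j$ with $1 \le j - i \le t$, and set $\ell = j - i$. The two occurrences occupy the index ranges $[i, i+|w|-1]$ and $[j, j+|w|-1]$, which overlap on $[j,\, i+|w|-1]$. This overlap has length $|w|-\ell$, which is strictly positive since $|w| > t \ge \ell$ and strictly less than $|w|$ since $\ell \ge 1$. Reading the overlap with respect to the copy of $w$ based at position $i$ shows it equals the length-$(|w|-\ell)$ suffix of $w$, while reading it with respect to the copy based at position $j$ shows it equals the length-$(|w|-\ell)$ prefix of $w$. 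These two identifications produce a nontrivial prefix of $w$ that is simultaneously a suffix of $w$, contradicting the unborderedness of $w$.

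With Theorem~\ref{ehrenfeuchttheorem} available as a black box, there is really no serious obstacle here; the lemma is a short deduction. The only step deserving any care is the bookkeeping that the border extracted from two overlapping occurrences is genuinely nontrivial, and that is precisely what the two strict inequalities $\ell \ge 1$ and $\ell < |w|$ provide, both of which are forced by the choice $|w| > t$ together with the hypothesized bound $\ell \le t$.
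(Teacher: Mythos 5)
You have proved the wrong statement. The statement under review is Theorem \ref{ehrenfeuchttheorem} itself (the Ehrenfeucht--Silberger result that an infinite word whose unbordered factors all have length at most $t$ must be eventually periodic), but your argument takes that theorem as a black box and uses its contrapositive to derive Lemma \ref{spaced out}. As a proof of the theorem this is circular: your very first step invokes the conclusion you are supposed to establish. What you have actually written is a correct proof of Lemma \ref{spaced out} --- essentially identical to the paper's own proof of that lemma, including the overlap bookkeeping showing that two occurrences at distance $\ell$ with $1 \le \ell \le t < |w|$ would force a nontrivial border --- but it contributes nothing toward Theorem \ref{ehrenfeuchttheorem}.

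For what it is worth, the paper does not prove Theorem \ref{ehrenfeuchttheorem} either; it cites it directly from Ehrenfeucht and Silberger, with a footnote remarking that a straightforward induction would also suffice for the application at hand. A genuine proof would have to start from the hypothesis that every unbordered factor of $W$ has length at most $t$ and extract eventual periodicity from it, for instance by analyzing how a maximal-length unbordered factor recurs and forces a global period. Nothing in your write-up engages with that hypothesis, so there is a complete gap between what is claimed and what is argued.
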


\begin{proof}[Proof of Lemma \ref{spaced out}]
		As an immediate corollary of Theorem \ref{ehrenfeuchttheorem}, we obtain that any infinite aperiodic word $W$ has an unbordered factor $w$ of length $\ell > t$. If an occurrence of $w$ began at index $i$ and another began at index $j \in \{i+1,\ldots,i+t\}$, then the factor $[j, i+\ell - 1]$ would be a nontrivial prefix of the second occurrence of $w$ and a nontrivial suffix of the first occurrence, contradicting the fact that $w$ is unbordered. We conclude that such a choice of $w$ satisfies the conditions of the lemma.
\end{proof}
We proceed to the proof that aperiodic recurrent words contain $5$-anti-powers.
\begin{proof}[Proof of Theorem \ref{main theorem}]
	As before, let $W$ be an aperiodic recurrent word and $w$ a factor guaranteed by Lemma \ref{spaced out} for $t = 100$. Let $\ell=|w|$.
	
	Since $W$ is recurrent, we can find a pair of occurrences of $w$ that are a distance $d_1 \ge \ell+1000$ apart. Again by recurrence, we can find a second copy of this pair that is at a distance $d_2 \ge 10d_1$ away from the first occurrence of this pair. Finally, applying the fact that $W$ is recurrent once again, we can find a copy of these four occurrences of $w$ that begins at an index $i_1 \ge d_2$. We have now identified four indices $i_1< i_2< i_3< i_4$ at which occurrences of $w$ begin such that $$i_2-i_1=i_4-i_3 =: d_1 \ge \ell+1000, \quad i_3-i_2 =:d_2 \ge 10d_1,\text{~~and~~} i_1 \ge d_2.$$ Figure \ref{sketch} gives a sketch of the anti-power we will construct.
	
	\begin{figure}[H]
		\begin{center}
			\includegraphics[width=.9\textwidth]{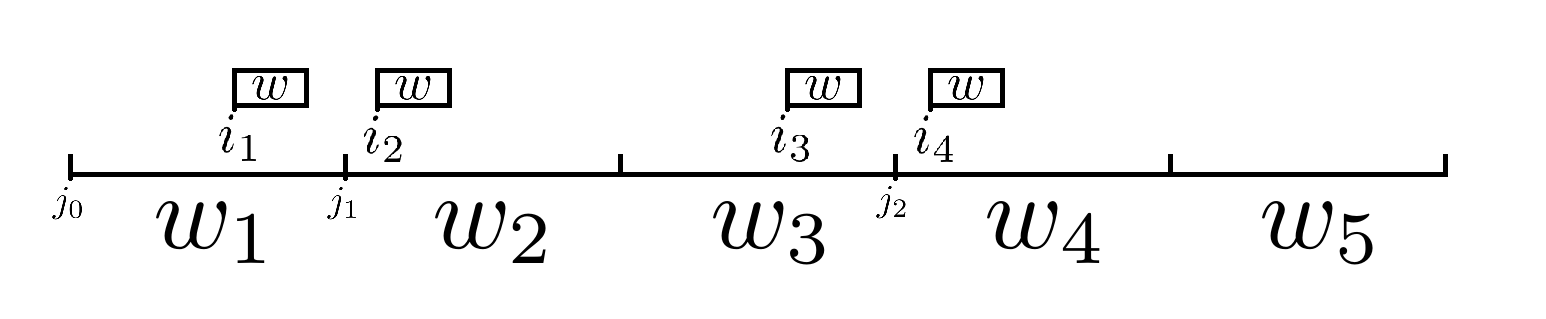}
		\end{center}
	\caption{The 5-anti-power we construct.}\label{sketch}
	\end{figure}

	Now let $j_1 := i_1+\ell+500$ and $j_2 \in \{i_3+\ell+500, i_3+\ell+501\}$ be such that $j_2 - j_1$ is even. Let $D = (j_2-j_1)/2$, and set $j_0 = j_1 - D$. (It readily follows from our construction that $j_0$ is positive.) We now construct 11 \textit{potential} anti-powers starting at $j_0$, each comprising 5 consecutive factors, called \textit{blocks}, of $W$. We then show that for at least one of these 11, all $\binom{5}{2}$ pairs of blocks are distinct. For $i \in \{1,\ldots,5\}$ and $c \in \{0,\ldots,10\}$, the $i^\text{th}$ block of the $c^\text{th}$ construction is $$w_i^{(c)} := [j_0+(i-1)(D+c),j_0+i(D+c) - 1].$$
	As a motivating example, setting $c = 0$, we see the $0^\text{th}$ construction is given by the following 5 blocks:
	\begin{align*}
	w_1^{(0)} = [j_1-D, j_1 - 1],\qquad w_2^{(0)} = [j_1,& j_1 +D - 1],\qquad w_3^{(0)} = [j_2-D, j_2 - 1],\\w_4^{(0)} = [j_2, j_2+D-1],\quad\quad&\quad\quad w_5^{(0)} = [j_2+D, j_2 +2D-1].
	\end{align*}

	A property of this construction, and its main purpose, is that $w_1^{(0)},w_2^{(0)},w_3^{(0)},w_4^{(0)}$ all contain copies of $w$ such that (every letter in) each copy of $w$ is more than 100 spaces away from either endpoint of the block that contains it. This is an immediate consequence of our choices of $j_1$ and $j_2$ to be between specific occurrences of $w$. For example, $w_2^{(0)}$ begins at least 500 indices before $i_2$, and at most $d_1$ indices before $i_2$. Since it is of length $D \ge d_2/2 > 2d_1 \ge d_1+\ell+1000$, it also ends at least 1000 indices after the copy of $w$ beginning at $i_2$ ends. The other 3 cases proceed similarly.
	
	Now let's see what happens for $w_a^{(c)}$ for other $c$. The maximum amount that any endpoint changes when compared to $w_a^{(0)}$ is $50$, which is the distance moved by the right endpoint of $w_5^{(10)}$. Thus, for every $a \in \{1,\ldots,4\}$ and $c \in \{1,\ldots,10\}$, the block $w_a^{(c)}$ also fully contains the same copy of $w$ identified in the corresponding block $w_a^{(0)}$, and this copy is at a distance of at least 50 from either endpoint.
	
	We proceed to show that one of these constructions produces an anti-power. For each $c$ where an anti-power is not produced, we must have $w_a^{(c)} = w_b^{(c)}$ for some $a , b \in \{1,\ldots,5\}$ with $a<b$. Then since $a < 5$, $w_a^{(c)}$ contains one of the copies of $w$ identified above, beginning some number $i$ of indices after the beginning of the block. Then $w_b^{(c)}$ must contain a copy of $w$ beginning at its $i^\text{th}$ letter as well. We claim this implies $w_a^{(c')} \neq w_b^{(c')}$ for all $c' \neq c$ in $\{0,\ldots,10\}$. Since the endpoints of these blocks again change by less than 50, both $w_a^{(c')}$ and  $w_b^{(c')}$ still fully contain the copies of $w$ that we identified in $w_a^{(c)}$ and $w_b^{(c)}$, although now these copies of $w$ begin at new relative indices: $i + (a-1)(c' - c)$ and $i + (b-1)(c' - c)$, respectively. If we assume for the sake of contradiction that $w_a^{(c')} = w_b^{(c')}$, then both blocks must contain an appearance of $w$ at both of these indices. However, these indices differ by $(a-b)(c'-c) \le (5-1)(10) < 100$, which contradicts our assertion that consecutive appearances of $w$ must appear at distance greater than 100.
	
	Consequently, each pair $a < b$ can satisfy $w_a^{(c)} = w_b^{(c)}$ for at most one value of $c$. There are $\binom {5}{2} = 10$ pairs of $a < b$ and 11 choices for $c$. This means that at least one choice of $c$ must have no such pairs, and therefore must result in an anti-power.
\end{proof}

\textit{Remark.} It may be of interest to discuss why this proof cannot be extended to construct a 6-anti-power. One source of intuition for this fact is as follows. When constructing our anti-power, we needed to force 4 of the 5 blocks to contain a copy of a specifically chosen $w$. We have two degrees of freedom when constructing an anti-power; this allows us to carefully place two of the endpoints out of the set of endpoints of the blocks of the anti-power. Since each endpoint is adjacent to a pair of blocks, this gives us fine control over at most 4 blocks. It turns out this is the best one can do; Fici et. al. \cite{fici2018anti} construct a word so that among any six consecutive blocks of equal length, there are two that are not only identical, but constant. 

\section{Anti-Powers in Binary Morphic Words}\label{Sec:Morphic}

We begin this section by establishing some additional notation. Let us fix an infinite aperiodic binary word $W$ that is generated by a uniform morphism $\mu:\{0,1\}^*\to\{0,1\}^*$. As mentioned at the end of the introduction, $\mu$ is $r$-uniform for some $r\geq 2$. We can write \[\mu(0)=A=A_1\cdots A_r,\quad\mu(1)=B=B_1\cdots B_r,\] where $A_1,\ldots,A_r,B_1,\ldots,B_r\in\{0,1\}$. We may assume that the first letter of $W$ is $0$ and that $A_1=0$ (i.e., $\mu$ is prolongable at $0$). Thus, $W=\mu^\omega(0)$. We must have $A\neq B$. Indeed, otherwise, we would have $W=AAAA\cdots$, contradicting the assumption that $W$ is aperiodic. As before, we write $[ i,j]$ to refer to the factor of $W$ beginning at index $i$ and ending at index $j$. One important point to keep in mind is that for each nonnegative integer $t$, the factor $[tr+1,tr+r]$ is equal to either $A$ or $B$ because it is the image under $\mu$ of the $(t+1)^\text{st}$ letter in $W$. 

We now proceed to prove Theorem \ref{morphictheorem}, which states that Conjecture \ref{Conj1} holds if $W$ is uniformly recurrent. We start with some lemmas. 
\begin{lemma}\label{Lem0}
	Let $W$ be an aperiodic binary word generated by an $r$-uniform morphism $\mu$ with $\mu(0)=A$ and $\mu(1)=B$. If $[\gamma+1,\gamma+3r]$ is equal to $AAB$ or $BBA$, then $r$ divides $\gamma$.
\end{lemma}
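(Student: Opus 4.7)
The plan is to argue by contradiction, handling the case $[\gamma+1,\gamma+3r]=AAB$ (the $BBA$ case is symmetric). Suppose $r\nmid\gamma$, and write $\gamma=qr+s$ with $0<s<r$. Then the factor $[\gamma+1,\gamma+3r]$ straddles four consecutive natural blocks $X_1,X_2,X_3,X_4\in\{A,B\}$ (where block $j$ is $[(q+j-1)r+1,(q+j)r]$), with the left endpoint of the factor lying $s$ positions inside $X_1$. Splitting each of the three length-$r$ sub-factors (equal respectively to $A$, $A$, $B$) at the natural block boundary it crosses would yield the identities
\[
X_2=A_{r-s+1}\cdots A_r\,A_1\cdots A_{r-s},\qquad X_3=A_{r-s+1}\cdots A_r\,B_1\cdots B_{r-s},
\]
together with $X_1$ ending in $A_1\cdots A_{r-s}$ and $X_4$ beginning with $B_{r-s+1}\cdots B_r$.

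The main step is then a case analysis on $X_2,X_3\in\{A,B\}$ (with a further split on $X_4\in\{A,B\}$ when needed), aiming in each case to deduce $A=B$. This is a contradiction because, by Proposition~\ref{Prop1}, aperiodicity of $W$ forces $A\neq B$. The essential tool is the Fine--Wilf periodicity lemma. As a representative case, take $X_2=A$: then equating $A=A_{r-s+1}\cdots A_r\,A_1\cdots A_{r-s}$ gives $A$ both linear period $s$ and linear period $r-s$, so Fine--Wilf yields $A$ cyclic period $d:=\gcd(s,r)$, and we may write $A=u^{r/d}$ for a word $u$ of length $d$. If $X_3=B$, then $B=A_{r-s+1}\cdots A_r\,B_1\cdots B_{r-s}$ forces $B$ to start with $u^{s/d}$ and to have linear period $s$, hence $B=u^{r/d}=A$. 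If $X_3=A$, one obtains $B_1\cdots B_{r-s}=u^{(r-s)/d}$, and splitting on $X_4\in\{A,B\}$ (via a short linear-period argument when $X_4=B$) again forces $B=A$. The case $X_2=B$ is analogous: the identity for $X_2$ now says directly that $B$ equals the cyclic rotation of $A$ by $s$, after which the constraints from $X_3$ and $X_4$ collapse to $A=B$ by analogous applications of Fine--Wilf.

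The main obstacle is careful bookkeeping when $s>r/2$, in which regime the constraint that $X_4$ begins with $B_{r-s+1}\cdots B_r$ pins down positions of $B$ that overlap those already determined, and the relevant linear period of $B$ has length $r-s$ rather than $s$. Using $B_1\cdots B_{r-s}=u^{(r-s)/d}$ together with $d\mid r-s$, one then propagates the period $d$ across all of $B$ to conclude $B=u^{r/d}=A$, as in the other sub-cases.
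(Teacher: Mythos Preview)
Your approach is correct but genuinely different from the paper's. The paper also argues by contradiction and (in your notation) looks at the natural blocks $X_2,X_3,X_4$, but instead of Fine--Wilf plus a case split on the values of $X_2,X_3,X_4\in\{A,B\}$, it runs a single elementary induction showing $A_1\cdots A_j=B_1\cdots B_j$ for all $j\le r$, branching only on whether $X_2=X_3$ or $X_2\neq X_3$. The inductive step just reads off one new character from an overlap with some $X_i\in\{A,B\}$ and appeals to the inductive hypothesis at a smaller index; no periodicity lemma is needed. Your route makes the underlying periodicity structure explicit (e.g.\ $A=u^{r/d}$), at the cost of more sub-cases; the paper's induction is shorter and more uniform.

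Two small points. First, invoking Proposition~\ref{Prop1} to get $A\neq B$ is circular in this paper's logical order: that proposition is proved \emph{after} Theorem~\ref{morphictheorem}, which in turn relies on this lemma. Just argue directly that $A=B$ would give $W=AAA\cdots$, contradicting aperiodicity. Second, your treatment of the case $X_2=B$ is only a sketch. It does go through (for instance, when $X_2=B$ and $X_3=A$ one can show $A$ has both linear periods $s$ and $r-s$, so Fine--Wilf already gives $A=u^{r/d}$ and hence $B=A$ without using $X_4$; the sub-case $X_2=X_3=B$ genuinely needs $X_4$), but ``analogous'' understates the bookkeeping, and the ``main obstacle'' you flag for $s>r/2$ is not really where the difficulty lies.
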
 
\begin{proof}
	We only consider the case in which $[\gamma+1,\gamma+3r]=AAB$; the proof is similar when $[\gamma+1,\gamma+3r]=BBA$. Suppose instead that $r$ does not divide $\gamma$, and let $h \in \{1,\ldots, r - 1\}$ be such that $r$ divides $\gamma + h$. Let $D$, $E$, and $F$ be the three consecutive factors of length $r$ starting at index $\gamma+h+1$. That is, $D = [\gamma+h+1, \gamma+h+r]$, $E = [\gamma+h+r+1, \gamma+h+2r]$, and $F = [\gamma+h+2r+1, \gamma+h+3r]$. Then $D$, $E$, and $F$ are each images of a single letter under $\mu$, so each is equal to either $A$ or $B$. 
	
	Because $W$ is aperiodic, we know that $A\neq B$. We are going to prove by induction on $j$ that 
	\begin{equation}\label{Eq2}
	A_1\cdots A_j=B_1\cdots B_j
	\end{equation} 
	for all $j\in\{1,\ldots,r\}$, which will yield our desired contradiction.
	
	Assume for the moment that $D=E$. Comparing the overlaps between $A$ and $D$ and between $B$ and $E$, we find that \[A_1\cdots A_h=[\gamma+r+1,\gamma+r+h]=D_{r-h+1}\cdots D_r=E_{r-h+1}\cdots E_r=[\gamma+2r+1,\gamma+2r+h]=B_1\cdots B_h.\] This proves \eqref{Eq2} for all $j\in\{1,\ldots,h\}$, completing the base case of our induction. Now choose $n\in\{h,\ldots,r-1\}$, and assume inductively that we have proven \eqref{Eq2} when $j=n$. We will prove \eqref{Eq2} when $j=n+1$, which will complete the inductive step. Of course, this amounts to proving that $A_{n+1}=B_{n+1}$, since we already know by induction that $A_1\cdots A_n=B_1\cdots B_n$. We determine the indices of the overlaps of $A$ and $B$ with $D$ and $F$, computing $A_{n+1} = D_{n - h + 1}$ and $B_{n+1} = F_{n - h + 1}$. Since $D,F \in \{A,B\}$, we have $D_{n-h+1},F_{n-h+1}\in\{A_{n-h+1},B_{n-h+1}\}$. Our induction hypothesis now tells us that $A_{n-h+1}=B_{n-h+1}$. It follows that $D_{n-h+1}=F_{n-h+1}$, which completes this case of the proof. 
	
	We now consider the case in which $D\neq E$. This implies that $\{D,E\}=\{A,B\}$. Comparing the overlaps of both copies of $A$ with $D$ and $E$, we see \[D_1\cdots D_{r-h}=[\gamma+h+1,\gamma+r]=A_{h+1}\cdots A_r=[\gamma+r+h+1,\gamma+2r]=E_1\cdots E_{r-h}.\]
	Since $\{D,E\}=\{A,B\}$, this proves that $A_1\cdots A_{r-h}=B_1\cdots B_{r-h}$. This proves \eqref{Eq2} for all $j\in\{1,\ldots,r-h\}$, completing the base case of our induction. Now choose $n\in\{r-h,\ldots,r-1\}$, and assume inductively that we have proven \eqref{Eq2} when $j=n$. We will prove \eqref{Eq2} when $j=n+1$, which will complete the inductive step. Of course, this amounts to proving that $A_{n+1}=B_{n+1}$ since we already know by induction that $A_1\cdots A_n=B_1\cdots B_n$. Computing overlaps once more, we find $D_{n+1}=A_{n+h+1-r}$ and $E_{n+1}=B_{n+h+1-r}$. Our induction hypothesis tells us that $A_{n+h+1-r}=B_{n+h+1-r}$, so $D_{n+1}=E_{n+1}$. Since $\{D,E\}=\{A,B\}$, this implies that $A_{n+1}=B_{n+1}$ as desired. 
\end{proof}

\begin{lemma}\label{Lem1}
Let $W$ be an aperiodic binary word generated by an $r$-uniform morphism. There is an integer $c_1=c_1(W)\geq 1$ with the following property. If $X$ is a word that begins at indices $i_1$ and $i_2$ in $W$ and $|X|\geq rc_1+2r - 2$, then $r$ divides $i_2-i_1$.  
\end{lemma}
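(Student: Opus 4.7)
The plan is to exploit Lemma \ref{Lem0}, which locks every occurrence of $AAB$ or $BBA$ in $W$ to start at a position congruent to $1 \pmod r$. The idea is to show that every sufficiently long factor of $W$ contains one of these ``anchor'' patterns; two occurrences of $X$ at $i_1, i_2$ would then each contain the anchor at the same internal index $j$, and applying Lemma \ref{Lem0} to the resulting occurrences of the anchor in $W$ at absolute positions $i_1 + j - 1$ and $i_2 + j - 1$ immediately yields $r \mid i_2 - i_1$.

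First I would verify that $W$ contains $AAB$ or $BBA$. Since $W = \mu^{\omega}(0)$ is a fixed point of $\mu$, the sequence of morphism blocks $P_0 P_1 P_2 \cdots$ coincides with $W$ itself under the relabeling $0 \leftrightarrow A$, $1 \leftrightarrow B$, and is therefore an aperiodic binary sequence. Any aperiodic binary sequence must contain $001$ or $110$ as a factor: if it avoided both, every $00$ would have to extend into a tail of $0$s and every $11$ into a tail of $1$s, forcing the sequence to be either eventually constant or strictly alternating, both of which are periodic. Fix such an anchor $Z \in \{AAB, BBA\}$ of length $3r$ arising from this argument.

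The main obstacle, and the heart of the argument, is producing a constant $G = G(W)$ such that every factor of $W$ of length at least $G$ contains $Z$. In the uniformly recurrent regime that is the one relevant for the proof of Theorem \ref{morphictheorem}, this is immediate from the definition of uniform recurrence applied to the factor $Z$. Outside this regime the conclusion can actually fail (e.g.\ when $\mu(1) = 1\cdots 1$, in which case $W$ has arbitrarily long runs of $1$s that contain no copy of $AAB$ or $BBA$), so Lemma \ref{Lem1} is best read as operating within the uniformly recurrent case, consistent with the hypothesis of the main theorem. Once such a $G$ is in hand, I would choose $c_1$ so that $r c_1 \geq G + r$: any factor $X$ with $|X| \geq r c_1 + 2r - 2$ then has a morphism-aligned interior of length at least $G + r$ (losing at most $r-1$ positions at each end to get to a block boundary), which must contain a complete copy of $Z$ occurring at morphism-aligned positions. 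The argument of the first paragraph then closes out the proof.
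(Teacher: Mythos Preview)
Your approach is correct and essentially identical to the paper's: locate an anchor $AAB$ or $BBA$ inside $X$ via uniform recurrence, then apply Lemma~\ref{Lem0} at both occurrences to force $r \mid i_2 - i_1$. The only cosmetic difference is that the paper invokes uniform recurrence on the length-$3$ pattern $001$ (or $110$) and then pushes forward through $\mu$ to find $AAB$ in a block-aligned subfactor of $X$, rather than applying uniform recurrence to $Z$ directly; your observation that the argument tacitly assumes uniform recurrence is exactly what the paper's own proof does as well.
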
 

\begin{proof}
As before, let $\mu$ be an $r$-uniform morphism that generates $W$, and let $\mu(0)=A$ and $\mu(1)=B$. Because $W$ is aperiodic, it is easy to verify that $W$ contains either $001$ or $110$ as a factor. Let us assume $W$ contains $001$; the proof is similar if we assume instead that it contains $110$. Because $W$ is uniformly recurrent, there exists $c_1\geq 1$ such that every factor of $W$ of length at least $c_1$ contains $001$. Now let $X$ be a factor of $W$ with $|X| = m \ge rc_1+2r - 2$, and assume $X=[i_1,i_1+m-1]=[i_2,i_2+m-1]$. It will again be helpful to look at factors of $W$ of the form $[tr+1, tr+r]$ since each such factor is equal to either $A$ or $B$.

Let $n$ be the unique multiple of $r$ in $\{i_1-1, \ldots, i_1+r -2\}$ and $n'$ be the unique multiple of $r$ in $\{i_1+m-r, \ldots, i_1+m-1\}$. Note that $n' - n \ge m-2r+2 \ge rc_1$ and that $[n+1,n']$ is a factor of $X$. Since $r$ divides $n$ and $n'$, $[n+1,n']$ is the image of a word $u$ under the map $\mu$. Moreover, $|u| =(n'-n)/r\ge c_1$. Our choice of $c_1$ guarantees that $u$ contains 001 as a factor, so $[n+1,n']$ must contain $AAB$ as a factor. Thus, $X$ contains $AAB$ as a factor. Let us say a copy of $AAB$ starts at the $\ell^\text{th}$ letter of $X$. Since $X=[i_1,i_1+m-1]=[i_2,i_2+m-1]$, we have $AAB=[i_1+\ell-1,i_1+\ell+3r-2]=[i_2+\ell-1,i_2+\ell+3r-2]$. Lemma \ref{Lem0} now guarantees that $r$ divides both $i_1+\ell-2$ and $i_2+\ell-2$, which implies that $r$ divides $i_2-i_1$.  
\end{proof}

\begin{corollary}\label{Cor1}
Let $\alpha$ be a positive integer. Let $W$ be an aperiodic binary word generated by an $r$-uniform morphism $\mu$, and let $c_1=c_1(W)$ be the constant from Lemma \ref{Lem1}. If $X$ is a word that begins at indices $i_1$ and $i_2$ in $W$ and $|X|\geq r^\alpha c_1+2r^\alpha - 2$, then $r^\alpha$ divides $i_2-i_1$.  
\end{corollary}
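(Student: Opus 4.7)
The plan is to apply Lemma \ref{Lem1} to the iterated morphism $\mu^\alpha$ in place of $\mu$. Since $\mu$ is $r$-uniform, $\mu^\alpha$ is $r^\alpha$-uniform, and since $W=\mu^\omega(0)$, we have $W=(\mu^\alpha)^\omega(0)$. Thus $\mu^\alpha$ is also a uniform morphism generating $W$, and the hypotheses of Lemma \ref{Lem1} are satisfied with $\mu^\alpha$ (an $r^\alpha$-uniform morphism) in place of $\mu$.

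The essential observation making this substitution legitimate is that the constant $c_1=c_1(W)$ in Lemma \ref{Lem1} depends only on $W$, not on the particular morphism generating it. Inspecting its proof, $c_1$ is chosen purely so that every factor of $W$ of length at least $c_1$ contains $001$ as a factor (or, symmetrically, $110$), a property intrinsic to $W$. Therefore the very same $c_1$ applies whether we regard $W$ as being generated by $\mu$ or by $\mu^\alpha$. Invoking Lemma \ref{Lem1} with $\mu^\alpha$ then yields: if $|X|\geq r^\alpha c_1+2r^\alpha-2$, then $r^\alpha$ divides $i_2-i_1$, which is exactly the statement of the corollary.

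The only small piece to double check is that the proof of Lemma \ref{Lem1} (via Lemma \ref{Lem0}) implicitly relies on the images of $0$ and $1$ being distinct, which for $\mu^\alpha$ means verifying $\mu^\alpha(0)\neq\mu^\alpha(1)$. This is immediate: if $\mu^\alpha(0)=\mu^\alpha(1)$, take the least such exponent $\beta\leq\alpha$; then $\mu^{\beta-1}(0)$ and $\mu^{\beta-1}(1)$ are distinct binary words of the same length whose images under $\mu$ coincide, which forces $\mu(0)=\mu(1)$, contradicting the fact that $W$ is aperiodic. There is no significant obstacle; the corollary is really just Lemma \ref{Lem1} applied to an iterate of the generating morphism.
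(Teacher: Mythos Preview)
Your proof is correct and follows essentially the same approach as the paper: apply Lemma~\ref{Lem1} with $\mu^\alpha$ (an $r^\alpha$-uniform morphism that also generates $W$) in place of $\mu$. Your additional remarks that $c_1$ depends only on $W$ and that $\mu^\alpha(0)\neq\mu^\alpha(1)$ are valid and make explicit what the paper leaves implicit.
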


\begin{proof}
	Note that if $W$ is generated by $\mu$, then it is also generated by $\mu^\alpha$. Since $\mu^\alpha$ is $r^\alpha$-uniform, the desired result follows immediately from Lemma \ref{Lem1} with $\mu$ replaced by $\mu^\alpha$ and $r$ replaced by $r^\alpha$. 
\end{proof}

We are now in a position to prove Theorem \ref{morphictheorem}. 

\begin{proof}[Proof of Theorem \ref{morphictheorem}]
As before, we let $W$ be an infinite aperiodic uniformly recurrent binary word that is generated by a morphism that is $r$-uniform for some $r\geq 2$. Let $c_1=c_1(W)$ be the constant from Lemma \ref{Lem1}, and let $C=(c_1+2)r$. Fix positive integers $i$ and $k$. Our goal is to show that $W$ contains a $k$-anti-power with blocks of length at most $Ck$ beginning at its $i^\text{th}$ position. This is obvious if $k=1$, so we may assume $k\geq 2$.

Let $\alpha$ be the unique positive integer such that $r^{\alpha-1}<k\leq r^\alpha$. Let $U=[i,k((c_1+2)r^\alpha-1)+i-1]$ be the factor of $W$ of length $k((c_1+2)r^\alpha-1)$ that begins at the $i^\text{th}$ position of $W$. We can write $U=U^{(1)}\cdots U^{(k)}$, where $|U^{(j)}|=(c_1+2)r^\alpha-1$ for all $j\in\{1,\ldots,k\}$. Suppose $U^{(j)}=U^{(j')}$ for some $j,j'\in\{1,\ldots,k\}$. Because $|U^{(j)}|=(c_1+2)r^\alpha-1\geq r^\alpha c_1+2r-2$, Corollary \ref{Cor1} tells us that $r^\alpha$ divides the difference between the index where $U^{(j)}$ starts and the index where $U^{(j')}$ starts. This difference is $(j'-j)((c_1+2)r^\alpha-1)$. At this point, we invoke the crucial fact that $(c_1+2)r^\alpha-1$ and $r^\alpha$ are relatively prime ($c_1$ is an integer). This means that $r^\alpha$ divides $j'-j$. Since $j,j'\in\{1,\ldots,k\}\subseteq\{1,\ldots,r^\alpha\}$, we must have $j=j'$. It follows that the blocks $U^{(1)},\ldots,U^{(k)}$ are pairwise distinct. We conclude the proof by observing that these blocks are of length $(c_1+2)r^\alpha-1<(c_1+2)rk = Ck$, as desired.
\end{proof}

\subsection*{Exceptional Words}
We conclude with the characterization of exceptional cases discussed in the introduction.
\begin{proof}[Proof of Proposition \ref{Prop1}]
Let $W$ be a binary word that starts with $0$ and is generated by an $r$-uniform morphism $\mu$. Suppose first that $W$ is uniformly recurrent and not equal to $0000\cdots$. Since $W$ is uniformly recurrent and contains $0$, it cannot contain arbitrarily long factors consisting of only 1's. However, it is easy to verify that such factors occur if $\mu(1)=11\cdots 1$. Hence, $\mu(1)\neq 11\cdots 1$. 

To prove the converse, let us assume that $W$ is not uniformly recurrent. This means that $W$ contains a factor $w$ such that $W$ contains arbitrarily long factors that do not contain $w$. There is a positive integer $\alpha$ such that $w$ is a factor of the prefix of $W$ of length $r^\alpha$. This prefix is $\mu^\alpha(0)$. Because there are arbitrarily long factors of $W$ that do not contain $w$, there are arbitrarily long factors that do not contain $\mu^\alpha(0)$. Because $W$ is generated by $\mu$, there are arbitrarily long factors of $W$ that do not contain $0$. This clearly cannot be the case if $\mu(1)$ contains 0 (since $\mu(0)$ necessarily contains 0), so we must have $\mu(1) = 11\cdots1$. Of course, this also implies that $W\neq 0000\cdots$.  

It remains to verify the characterization of aperiodic words. The words $0000\cdots$, $0111\cdots$, and $0101\cdots$ are not aperioidic (i.e., they are eventually periodic). Furthermore, if $\mu(0)=\mu(1)$, then $W=\mu(0)\mu(0)\mu(0)\cdots$ is periodic. This proves one direction. 

For the converse, assume $\mu(0)\neq \mu(1)$ and $W\not\in\{0000\cdots, 0111\cdots, 0101\cdots\}$. We first assume $W$ is uniformly recurrent. It is easy to verify that $W$ must contain either 001 or 110. In the proof of Theorem \ref{morphictheorem} (and the results immediately preceding it), the only times we used the aperiodicity of the word under consideration were when we wanted to deduce that $A\neq B$ and that the word contains either $001$ or $110$. This means that the same proof applies to our word $W$ to show that $W$ contains arbitrarily long anti-powers starting at every index. If $W$ were periodic with a period of $k$ past some index $i$, then it would only have at most $k$ distinct factors of any fixed size beginning at index $i$. This would exclude the appearance of $(k+1)$-anti-powers starting at $i$, which would be a contradiction. Hence, $W$ is aperiodic. 

Now assume $W$ is not uniformly recurrent. It follows from the first part of this proof that $\mu(1)=11\cdots 1$ and that $W$ contains arbitrarily long factors that do not contain $0$. On the other hand, since $W\neq 0111\cdots$ and $\mu(0)$ contains 0, we can prove inductively that $W$ contains infinitely many zeros. No eventually periodic word can satisfy both of these conditions simultaneously, so the proof is complete.
\end{proof}

\section{Further Work}
In Section \ref{Sec:Morphic}, we settled part of our vague Conjecture \ref{Conj1}. An obvious next step would involve proving additional cases of this conjecture. One way to do this would be to remove the ``binary" condition from Theorem \ref{morphictheorem}. That theorem also specifies that the word under consideration is generated by a uniform morphism; one could attempt to remove this ``uniform" condition. Of course, we would certainly like to see the conjecture proved in its entirety. Since the conjecture is not completely precise, this would amount to classifying those morphic words $W$ for which there exists a constant $C(W)$ satisfying the property stated in the conjecture.   
	
	\section{Acknowledgments}
	The authors would like to thank Amanda Burcroff for providing many helpful comments on the first drafts of this paper. The second author was supported by a Fannie and John Hertz Foundation Fellowship and an NSF Graduate Research Fellowship.

\bibliographystyle{acm}
\bibliography{recurrent_antipowers_bib}

\end{document}